\def\figurename{Figure} 
\renewcommand{\fnum@figure}[1]{\figurename~\thefigure.}
\def\tablename{Table} 
\renewcommand{\fnum@table}[1]{\tablename~\thetable.}
\newtheorem{theorem}{Theorem}[section]
\newtheorem{lemma}[theorem]{Lemma}
\theoremstyle{definition}
\newtheorem{definition}[theorem]{Definition}
\theoremstyle{remark}
\newtheorem{remark}[theorem]{Remark}
\numberwithin{equation}{section}
\def\P{\mathbb P}
\def\R{\mathbb R}
\def\E{\mathbb E}
\def\Q{\mathbb Q}
\def\E{\mathbb E}
\def\N{\mathbb N}
\def\cal{\mathcal}
\begin{document}

\title{Generalized backward doubly
stochastic differential equations driven by Lévy processes with continuous coefficients\thanks{The first author is supported by TWAS Research Grants to individuals (No. 09-100 RG/MATHS/AF/\mbox{AC-I}--UNESCO FR: 3240230311)}}

\author{A. Aman\thanks{augusteaman5@yahoo.fr, corresponding author}\, and J. M. Owo\thanks{owo jm@yahoo.fr} \vspace{0.2cm}\\
UFR de Math\'{e}matiques et Informatique\\ $22$ BP $582$ Abidjan, C\^{o}te d'Ivoire
\\ Universit\'{e} de Cocody}

\date{}
\maketitle \thispagestyle{empty} \setcounter{page}{1}

\thispagestyle{fancy} \fancyhead{}
 \fancyfoot{}
\renewcommand{\headrulewidth}{0pt}

\begin{abstract}
A new class of generalized backward doubly stochastic differential equations (GBDSDEs
in short) driven by Teugels martingales associated
with Lévy process are investigated. We establish a comparison theorem which allows us to derive an existence result of solutions
under continuous and linear growth conditions.
\end{abstract}%

\vspace{.08in} \noindent \textbf{MSC}:Primary: 60F05, 60H15; Secondary: 60J30\\
\vspace{.08in} \noindent \textbf{Keywords}: Backward doubly stochastic
differential equations; Lévy processes; Teugels martingales;
comparison theorem; continuous and linear growth conditions.

\section{Introduction}
Backward Stochastic Differential Equations (BSDEs) have been introduced (in the non-linear case) by Pardoux
and Peng \cite{PP1}. Originally, the study of the BSDEs has been motivated by its connection with partial differential equations (PDEs, in short). Indeed, BSDEs provides the probabilistic interpretation for solutions of both parabolic and elliptic semi linear partial differential equations generalizing the well-know Feynman-Kac formula (see Pardoux and Peng \cite{PP2}, Peng\cite{Pg}). Very quickly this kind of equations has gained importance because of their many applications in the theory of mathematical finance (El Karoui et al., \cite{ELK1}), in
stochastic control (El Karoui and Hamadène, \cite{KH}) and stochastic games (Hamadène and Lepeltier, \cite{HL2}). Roughly speaking, BSDEs is equation in the form:
\begin{eqnarray}
Y_t=\xi+\int_{t}^{T}f(s,Y_s,Z_s)ds-\int_{t}^{T}Z_sdW_s,
\label{initial}
\end{eqnarray}
where $f$ is the generator, $\xi$ is the terminal value and $W$ is the brownian motion. All of them are the given data. Denote by $(\cal{F}_t )_{0\leq t\leq T}$ the natural filtration generated by $W$, the solution of BSDE $(\xi,f)$ is the $\cal{F}_t$-adapted process $(Y, Z)$ satisfies \eqref{initial} and belongs in the appropriate space. In \cite{PP1}, Pardoux and Peng derived existence and uniqueness result to BSDE $(\xi, f)$ under uniformly Lipschitz generator. They used the martingale representation theorem which is the main tool in the theory of BSDEs. A few years later, further researches weak the Lipschitz condition.  Lepeltier and San Martin \cite{LSm} study BSDEs with continuous
coefficients, Kobylanski \cite{Koby} introduced BSDEs with the quadratic coefficients in $z$, Briand and Carmona \cite{BC} considered BSDEs with polynomial growth generators.

On the other hand, applying the idea used in \cite{PP1}, Pardoux and Peng introduced in \cite{PardPeng}
the so-called backward doubly stochastic differential equations (BDSDEs, in short). This kind of BDSDEs gives a probabilistic representation for a class of quasilinear stochastic partial differential equations (SPDEs, in short). Next, Bally and Matoussi \cite{BMat} used also BDSDEs to give the probabilistic representation of the weak solutions of parabolic semi linear SPDEs in Sobolev spaces; Matoussi and Scheutzow \cite{AM} introduced another kind of BDSDEs to derive a probabilistic representation for the solution of SPDEs with nonlinear noise term given by the Itô-Kunita stochastic integral.; Boufoussi et al. \cite{Boufsi} recommended a class of generalized BDSDEs (GBDSDEs, in short) which involved an integral with respect to an adapted continuous increasing process and gave the probabilistic representation for stochastic viscosity solutions of semi-linear SPDEs with a Neumann boundary condition.

In \cite{NS}, Nualart and Schoutens proved a martingale representation theorem
associated to a Lévy process. This progress allows them to establish  in \cite{NSc} the existence and
uniqueness result for BSDEs associated with a Lévy process. In continuation of all this works, Ren et al. \cite{Ren} showed existence
and uniqueness result to GBDSDEs driven by Lévy process (GBDSDEL, in short) under Lipschitz on the generator. Moreover, the probabilistic interpretation for solutions of a class of stochastic partial differential integral equations (SPDIEs, in short) with a nonlinear Neumann boundary condition has been established.

In this note, we consider GBDSDEL
\begin{eqnarray}
Y_{t}&=&\xi+\int_{t}^{T}f(s,Y_{s^-},Z_{s})ds+\int_{t
}^{T}h(s,Y_{s^-})dA_s+\int_{t}^{T}g(s,Y_{s^-})\,d\overleftarrow{B}_{s}\nonumber\\
&&-\sum_{i=1}^{\infty}\int_{t}^{T}Z^{(i)}_{s}dH^{(i)}_{s},\,\ 0\leq t\leq
T.\label{a111}
\end{eqnarray}
More precisely, we establish the existence result to BDSDEs \eqref{a011} under continuous condition on the generators. The proof is strongly linked to the comparison theorem which do not hold in general case (see \cite{Aal} for BDSDE and  the counter-example in \cite{Barles} for BSDEs driven by Lévy processes).
To overcome this difficulty, we assume relation \eqref{adprop} between the generator $f$ and Lévy process $L$ which have only $m$ different jump size with no continuous part.

The rest of the paper is organized as follows. In section 2, we introduce some
preliminaries and deal with a comparison theorem for GBSDEL under Lipschitz generators.
Section 3 is devoted to prove the existence
result to GBDSDEs driven by Lévy processes under continuous generators.
\section{Preliminaries}
\subsection{Notations and Definition}
Let $(\Omega, \mathcal{F},\mathbb{P})$ be a complete probability
space on which are defined all the processes stated in this paper and $T$ be
a
fixed final time.
\\
Let  $\{B_{t}; 0\leq t\leq T \}$ be a standard
Brownian motion, with values in $\mathbb{R}^{}$ and $%
\{L_{t}; 0\leq t\leq T \}$ be a $\mathbb{R}%
^{}$-valued Lévy process independent of $\{B_{t}; 0\leq t\leq T \}$ corresponding to a standard Lévy measure $\nu$ such that $\int_{\R}(1\wedge y)\nu(dy)<\infty$.
\\
Let $\mathcal{N}$ denote the class of $\P$-null sets
of $\mathcal{F}$. For each $t \in [0,T]$, we define
\begin{equation*}
\mathcal{F}_{t}\overset{\Delta}{=}\mathcal{F}_{t}^{L} \vee \mathcal{F}%
_{t,T}^{B},
\end{equation*}
where for any process $\{\eta_{t}\}$ ; $\mathcal{F}_{s,t}^{\eta}=\sigma
\{\eta_{r}-\eta_{s}; s\leq r \leq t \} \vee \mathcal{N}$, $\mathcal{F}%
_{t}^{\eta}=\mathcal{F}_{0,t}^{\eta}$. \newline
Note that $\{\mathcal{F}_{t}^L,\, t\in [0,T]\}$ is an increasing filtration
and $\{\mathcal{F}_{t,T}^B,\, t\in [0,T]\}$ is a decreasing filtration. Thus
the collection $\{\mathcal{F}_{t},\, t\in [0,T]\}$ is neither increasing nor
decreasing so it does not constitute a filtration.

In the sequel, $\{A_{t}; 0\leq t\leq T \}$
is a $\mathcal{F}_{t}$-measurable, continuous and increasing real valued process such that $A_0=0$.

Let us introduce some spaces:\newline
For any $m\geq1$, $\mathcal{M}^{2}(0,T,\mathbb{R}^{m})$ denotes the space of $\R^{m}$-valued random process satisfying:
\begin{enumerate}
\item[(i)] $\|\varphi \|_{\mathcal{M}^{2}(\R^{m})}^{2}=\sum_{i=1}^{m}\mathbb{E}%
\left(\int_{0}^{T}\mid \varphi_{t}^{(i)} \mid^{2} dt\right)< \infty$

\item[(ii)] $\varphi_{t}$ is $\mathcal{F}_{t}$-jointly measurable, for any $t \in
[0,T].$
\end{enumerate}
Similarly, $\mathcal{S}^{2}(0,T)$
stands for the set of real valued random processes
which satisfy:
\begin{enumerate}
\item[(i)] $\|\varphi \|_{\mathcal{S}^{2}}^{2}=\mathbb{E}\left(%
\underset{0\leq t\leq T}{\sup} \mid \varphi_{t}\mid^{2}\right)< \infty$

\item[(ii)] $\varphi_{t}$ is $\mathcal{F}_{t}$-measurable, for any $t \in
[0,T].$
\end{enumerate}
$\mathcal{A}^{2}(0,T)$ denotes
the set of (class of $d\P\otimes dA_t$ a.e. equal) real
valued measurable random processes $\{\varphi_{t}; 0\leq t\leq T \}$
such that
\begin{enumerate}
\item[(i)] $\|\varphi\|_{\mathcal{A}^2}^{2}=\displaystyle\mathbb{E}
\left(\int_{0}^{T}\mid \varphi_{t} \mid^{2} dA_t\right)< \infty.$
\item[(ii)] $\varphi_{t}$ is $\mathcal{F}_{t}$-measurable, for any $t \in
[0,T].$
\end{enumerate}
The space $\mathcal{E}_{m}(0,T)=\big(\mathcal{S}^{2}(0,T)
 \cap\mathcal{A}^{2}(0,T)\big)\times\mathcal{M}^{2}(0,T,\R^{m})$ endowed with norm
\begin{eqnarray*}
\|(Y,Z)\|_{\mathcal{E}_m}^{2}=\mathbb{E}\left(%
\underset{0\leq t\leq T}{\sup} \mid Y_{t}\mid^{2}
+\int_{0}^{T}\mid Y_{s} \mid^{2} dA_s+
\int_{0}^{T}\|Z_{s}\|^{2} ds\right)
\end{eqnarray*}
is a Banach space.

Furthermore, let consider the Teugels Martingale $(H^{(i)})_{i\geq1}$
associated with the Lévy process  $\{L_{t}; 0\leq t\leq T \}$ defined by:
\begin{eqnarray*}
H_{t}^{(i)}=c_{i,i}T_{t}^{(i)}+c_{i,i-1}T_{t}^{(i-1)}+...+c_{i,1}T_{t}^{(1)},
\end{eqnarray*}
where $T_{t}^{(i)}=L_{t}^{(i)}-\E(L_{t}^{(i)})=L_{t}^{(i)}-t\E(L_{1}^{(i)})$
for all $i\geq1$. Let remark that the process $L_{t}^{(i)}$ have power jump, for all $i\geq 1$. More precisely,
denoting $ \Delta L_{s}=  L_{s^{}}-L_{s^{-}}$, we have $L_{t}^{(1)}=L_{t}$ and $\displaystyle L_{t}^{(i)}=\sum_{0<s\leq t}( \Delta L_{s})^i$
for $i\geq2$. In \cite{NSc}, Nualart and Schoutens proved  that
the coefficients $c_{i,k}$ correspond to the orthonormalization
of the polynomials $1,\ x, \ x^2,\cdot\cdot\cdot$ with respect to the measure
$\mu(dx)=x^2\nu(dx)+\sigma^2\delta_0(dx)$, i.e $q_{i}(x)=c_{i,i}x^{i-1}+c_{i,i-1}x^{i-2}+...+c_{i,1}$.
The martingale $(H^{(i)})_{i\geq1}$ can be chosen to be
pairwise strongly orthonormal martingale. \\That is for all $i,j$,
\ $\displaystyle\langle H^{(i)},H^{(j)}\rangle_{t}=\delta_{ij}t$.
\begin{remark}
Since the Lévy process $L$ has only $m$ different jump size with no continuous part, the Teugels martingales $H^{(i)}=0,\; \forall\; i\geq m+1$.
In this context, BDSDEs \eqref{a111} can be write rigorously
\begin{eqnarray}
Y_{t}&=&\xi+\int_{t}^{T}f(s,Y_{s^-},Z_{s})ds+\int_{t
}^{T}h(s,Y_{s^-})dA_s+\int_{t}^{T}g(s,Y_{s^-})\,d\overleftarrow{B}_{s}\nonumber\\
&&-\sum_{i=1}^{m}\int_{t}^{T}Z^{(i)}_{s}dH^{(i)}_{s},\,\ 0\leq t\leq
T.\label{a011}
\end{eqnarray}
\end{remark}
\begin{definition}
A pair of $\mathbb{R} \times
\R^{m}$-valued process $(Y,Z)$ is called solution of GBDSDEL $(\xi, f,g, h, A)$
driven by Lévy processes if $(Y,Z)\in \mathcal{E}_{m}(0,T)$ and verifies $\eqref{a011}$.
\end{definition}
\subsection{GBDSDEL with Lipschitz coefficients}
For memory, let recall the existence and uniqueness result for GBDSDEL
under Lipschitz condition due to Ren et al., \cite{Ren}. Here, the function $g$ depends on $z$ and we have the following assumptions:
\begin{description}
\item \textbf{(A1)}\ \ The terminal value $\xi \in \mathrm{L}^{2}(\Omega,
\mathcal{F}_{T}, \mathbb{P}, \mathbb{R})$ such that for all $\lambda>0$,\ \
$\E (e^{\lambda A_T}|\xi|^2)<\infty,$
\noindent \item \textbf{(A2)}\ \ The generators $f, g:\Omega \times [0,T]\times
\mathbb{R} \times \R^{m}\rightarrow
\mathbb{R}$ and $h:\Omega \times [0,T]\times \mathbb{R}
\rightarrow \mathbb{R}$ satisfy, for $\beta_1,\ \beta_2\in\R$,
$K>0$, $0 < \alpha < 1$ and three $\mathcal{F}_t$-measurable processes
$\{f_t, g_t, h_t:0\leq t\leq T\}$ with values in $[1,\infty[$
and for all $(t,y,z)\in\Omega \times [0,T]\times
\mathbb{R} \times \R^{m}$, $\lambda >0$
\begin{itemize}
  \item [(i)] $f(.y,z), g(.y,z)$ and $h(.,y)$ are jointly measurable,
  \item [(ii)] $\left\{
             \begin{array}{ll}
               |f(t,y,z)|\leq f_t+ K(|y|+\|z\|), \vspace{0.1cm}& \hbox{} \\
               |g(t,y,z)|\leq g_t+ K(|y|+\|z\|), \vspace{0.1cm}& \hbox{} \\
               |h(t,y)|\leq h_t+K|y|, & \hbox{}
             \end{array}
           \right.$
  \item [(iii)] $\displaystyle\E (\int_{0}^{T}e^{\lambda A_t}f_t^2dt+
  \int_{0}^{T}e^{\lambda A_t}g_t^2dt+\int_{0}^{T}e^{\lambda A_t}h_t^2dA_t)<\infty,$

 \item [(iv)]  $\left\{
             \begin{array}{ll}
               |\langle y-y',f(t,y,z)-f(t,y',z)\rangle  \leq \beta_1\mid
y-y'\mid^{2},   \vspace{0.1cm}& \hbox{} \\
               \langle y-y',h(t,y)-h(t,y')\rangle  \leq \beta_2\mid
y-y'\mid^{2},  \vspace{0.1cm}& \hbox{}

             \end{array}
           \right.$
 \item [(v)] $\beta_2<0$,
\end{itemize}

\noindent \item \textbf{(A3)}\ \ $\left\{
             \begin{array}{ll}
             |f(t,y,z)-f(t,y',z')|^{2}
             \leq K( |y-y'|^{2}+\| z-z'\|^{2}),\vspace{0.1cm}& \hbox{}\\
               |g(t,y,z)-g(t,y',z')|^{2} \leq K|
y-y'|^{2}+\alpha \| z-z'\|^{2}, \vspace{0.1cm}& \hbox{}\\
                |h(t,y)-h(t,y')|^{2}\leq K|y-y'|^{2}.& \hbox{}
             \end{array}
           \right.$
\end{description}
\begin{theorem}[Ren et al. \cite{Ren}]\label{lm0}
Under the assumptions $({\bf A1})$-$({\bf A3})$, the GBDSDEL $\eqref{a011}$ has a unique solution.
\end{theorem}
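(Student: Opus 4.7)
The plan is to combine the Nualart--Schoutens martingale representation for L\'evy processes with a Banach fixed-point argument on $\mathcal{E}_m(0,T)$ equipped with the weighted norm
$$\|(Y,Z)\|_{\lambda,\mu}^2 = \mathbb{E}\int_0^T e^{\lambda A_s+\mu s}\bigl(|Y_s|^2\,dA_s + |Y_s|^2\,ds + \|Z_s\|^2\,ds\bigr),$$
for $\lambda,\mu>0$ to be fixed at the end. First I would treat the linear/inhomogeneous version in which $f,g,h$ are replaced by prescribed adapted data $(\alpha_s,\beta_s,\gamma_s)$: form the martingale
$$M_t=\mathbb{E}\Bigl[\xi+\int_0^T\alpha_s\,ds+\int_0^T\gamma_s\,dA_s+\int_0^T\beta_s\,d\overleftarrow{B}_s\,\Bigm|\,\mathcal{F}_t\Bigr],$$
apply the Nualart--Schoutens representation in the L\'evy filtration to extract the components $Z^{(i)}$ ($1\le i\le m$), and define $Y$ so that \eqref{a011} is satisfied. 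Assumptions \textbf{(A1)} and (A2)(iii) supply the integrability placing $(Y,Z)$ in $\mathcal{E}_m(0,T)$.

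Next, introduce the map $\Phi:(y,z)\mapsto (Y,Z)$, where $(Y,Z)$ is the linear-case solution with data $\alpha_s=f(s,y_{s^-},z_s)$, $\beta_s=g(s,y_{s^-},z_s)$, $\gamma_s=h(s,y_{s^-})$. For two inputs $(y,z),(y',z')$ with differences $\bar y,\bar z$ and corresponding image differences $\bar Y,\bar Z$, apply the BDSDE version of It\^o's formula (in the spirit of Pardoux--Peng) to $e^{\lambda A_t+\mu t}|\bar Y_t|^2$ on $[t,T]$. Taking expectations kills the $d\overleftarrow{B}$ and $dH^{(i)}$ martingale increments, and what remains are the pairings $\langle\bar Y_s,f(s,y_{s^-},z_s)-f(s,y'_{s^-},z'_s)\rangle\,ds$ and $\langle\bar Y_s,h(s,y_{s^-})-h(s,y'_{s^-})\rangle\,dA_s$ (controlled by the monotonicity (iv)) together with an extra $|g(s,y_{s^-},z_s)-g(s,y'_{s^-},z'_s)|^2\,ds$ coming from the backward Brownian quadratic variation (controlled by \textbf{(A3)}). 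Young's inequality handles the mixed $y$--$z$ Lipschitz terms, and the constraint $\alpha<1$ absorbs the $\bar Z$-piece of $g$, yielding a bound of the form $\|\Phi(y,z)-\Phi(y',z')\|_{\lambda,\mu}^2\le \kappa\,\|(\bar y,\bar z)\|_{\lambda,\mu}^2$.

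The closing step is the calibration of weights: choose $\mu$ large enough that the $ds$-coefficient $\mu-2\beta_1-C(K)$ is positive, and $\lambda$ large enough that the $dA_s$-coefficient $\lambda+2\beta_2-C'(K)$ is positive. The latter is where $\beta_2<0$ is decisive: without this sign the $K$-Lipschitz contribution of $h$ on the $dA_s$ side could not be beaten by any finite $\lambda$. Once $\kappa<1$ is secured, the Banach fixed-point theorem yields a unique fixed point of $\Phi$, which is the unique solution of \eqref{a011}. The main obstacle is precisely this joint calibration of $(\lambda,\mu)$: the $ds$ and $dA_s$ integrals must be dominated simultaneously, and the cooperation of $\beta_2<0$ with $\alpha<1$ is exactly what closes the contraction estimate; the exponential integrability $\mathbb{E}(e^{\lambda A_T}|\xi|^2)<\infty$ built into \textbf{(A1)} ensures $\lambda$ can be taken as large as needed.
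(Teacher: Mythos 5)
First, note that the paper offers no proof of this theorem: it is imported verbatim from Ren et al.\ \cite{Ren} ("for memory, let recall\dots"), so your proposal can only be measured against the standard argument in that reference. Your overall architecture --- solve the inhomogeneous linear equation via the Nualart--Schoutens representation, then run a Banach fixed point on $\mathcal{E}_m(0,T)$ with an $e^{\mu s+\lambda A_s}$ weight --- is exactly the standard route, and the way you use $\alpha<1$ to absorb the $\bar Z$-contribution of $g$ is correct. One point you gloss over but should at least flag: $\{\mathcal{F}_t\}$ is \emph{not} a filtration, so the representation step must be carried out in the genuine filtration $\mathcal{F}_t^L\vee\mathcal{F}_{0,T}^B$ (using independence of $L$ and $B$ to preserve the predictable representation property under this enlargement), after which one checks that the resulting $(Y,Z)$ is in fact $\mathcal{F}_t$-measurable; this is the Pardoux--Peng device and is standard, but it is where the "martingale" $M_t$ you write down needs justification.

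The genuine flaw is in your contraction estimate. In the Picard map $\Phi:(y,z)\mapsto(Y,Z)$ the generator is frozen at the \emph{input}, so the pairings you must control are $\langle\bar Y_s,\,f(s,y_s,z_s)-f(s,y'_s,z'_s)\rangle$ and $\langle\bar Y_s,\,h(s,y_s)-h(s,y'_s)\rangle$, where $\bar Y$ is the \emph{image} difference and $\bar y$ the \emph{input} difference. These are different processes, so the monotonicity condition (iv) --- which pairs $y-y'$ with $f(t,y,z)-f(t,y',z)$ --- simply cannot be invoked here; the correct tool is the Lipschitz condition $(\mathbf{A3})$ together with Young's inequality, e.g.\ $2|\bar Y_s||\Delta h_s|\le a|\bar Y_s|^2+a^{-1}K|\bar y_s|^2$. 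Consequently your closing claim that $\beta_2<0$ is "decisive" and that "without this sign the $K$-Lipschitz contribution of $h$ on the $dA_s$ side could not be beaten by any finite $\lambda$" is false: the $dA_s$ term $a^{-1}K\,\E\int e^{\mu s+\lambda A_s}|\bar y_s|^2\,dA_s$ is beaten by building the appropriate coefficients into the equivalent norm (taking $\lambda-a$ proportional to $a^{-1}K/\bar\alpha$ with $\bar\alpha=\alpha+a^{-1}K<1$), not by a sign condition on $h$. The paper itself makes this point in Remark 2.4: conditions (iv) and (v) of $(\mathbf{A2})$ are \emph{not} needed for existence and uniqueness (and $\beta_2<0$ can always be manufactured by the exponential change of variables anyway); they only streamline the a priori estimates. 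So your proof would not close as written, but it is repaired by replacing every appeal to (iv) and to $\beta_2<0$ in the contraction step with $(\mathbf{A3})$ and the weighted-norm calibration just described.
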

\begin{remark}
\begin{description}
\item $(i)\;$ Whenever $(Y_t, Z_t)$ satisfies $(\ref{a011})$, $(\bar{Y}_t,\ \bar{Z}_t)=(e^{\lambda A_t}Y_t,\ e^{\lambda A_t}Z_t)$ satisfies
an analogous GBDSDEL, with $f$, $g$ and $h$ replaced by
\begin{eqnarray*}
\bar{f}(t,\ y,\ z)&=& e^{\lambda A_t}f(t,\ e^{-\lambda A_t}y,\ e^{-\lambda A_t}z)\\
\bar{g}(t,\ y,\ z)&=& e^{\lambda A_t}g(t,\ e^{-\lambda A_t}y,\ e^{-\lambda A_t}z)\\
\bar{h}(t,\ y)&=& e^{\lambda A_t}h(t,\ e^{-\lambda A_t}y)-\lambda y
\end{eqnarray*}
Hence, if $h$ satisfies $(iv)$ with a possibly non negative $\beta_2$, we can always
choose $\lambda$ such that $\bar{h}$ satisfies $(iv)$ with a strictly negative $\beta_2$. Consequently, $(v)$ is not a severe restriction.
\item $(ii)\;$ To assure the existence and uniqueness of the solution to the GBDSDEL \eqref{a011}, there is no need to have the assumptions $(iv)$ and $(v)$ of $({\bf A2})$. It is just needed to simplify the calculation in the proof of a priori estimate.
\end{description}
\end{remark}
\subsection{Comparison theorem}
The comparison theorem is one of the principal tools in the theory of the BSDEs which does not hold in general for BSDEs with jumps (see the counter-example in
Barles et al. \cite{Barles}). With a additional property of the jumps size \eqref{adprop} as in \cite{Q}, we derive the comparison theorem for GBDSDEs driven by Lévy processes under Lipschitz condition which generalizes the work of Yufeng et al. \cite{Y} for GBDSDEs with non jumps. In this fact, given $\xi^k$ and $f^k,\ h^k,\ g$ for $k=1,2$ we consider
\begin{eqnarray}\label{eq1l}
Y_t^{k}&=&\xi^{k}+\int_{t}^{T}f^{k}(s,Y_{s^{-}}^{k},Z_s^{k})ds+
\int_{t}^{T}h^k(s,Y_{s^{-}}^{k})dA_{s}+
\int_{t}^{T}g(s,Y_{s^{-}}^{k})\overleftarrow{dB_{s}}\notag\\
&&-\sum_{i=1}^{m}\int_{t}^{T}Z_s^{k(i)}dH_{s}^{(i)},\ \ t\in[0,T].
\end{eqnarray}
Under assumptions $({\bf A1}), \; ({\bf A2})$  and $({\bf A3})$, it follows from Theorem \ref{lm0} that $(Y^{k},Z^{k})$ is a unique solution of BDSDEL \eqref{eq1l}.
\begin{theorem}\label{tc}
Assume  $({\bf A1})$-$({\bf A3})$ and let $(Y^1,Z^1)$ and $(Y^2,Z^2)$ be the solutions of equations \eqref{eq1l} for $k=1,2$. We suppose:
\begin{itemize}
\item $\xi^1\geq \xi^2, \ \P$-a.s.
\item $f^1(t,y,z)\geq f^2(t,y,z)$, and $h^1(t,y)\geq h^2(t,y)$ \ $\P$-a.s.,
  for all $(t,y,z)\in [0,T]\times \R\times\R^m$,
\item $\displaystyle\beta^i_t=\frac{f^1(t,y^{2},\widetilde{z}^{(i-1)})-
 f^1(t,y^{2},\widetilde{z}^{(i)})}{z^{1(i)}- z^{2(i)}}
  \mathbf{1}_{\left\{z^{1(i)}\neq z^{2(i)}\right\}}$,
\end{itemize}
where
$\widetilde{z}^{(i)}=\Big(z^{2(1)},z^{2(2)},...,z^{2(i)},z^{1(i+1)},...,z^{1(m)}\Big)$
such that
\begin{eqnarray}
\sum_{i=1}^{m}\beta_t^i\Delta H_t^{(i)}>-1, \ dt\otimes d\P\mbox{-a.s}.\label{adprop}
\end{eqnarray}
Then, we have for all $t\in[0,T],\:\: Y_t^1\geq Y_t^2$, a.s.

Moreover, for all $(t,y,z)\in[0,T]\times\mathbb{R} \times \mathbb{R}^m$, if \ $ \xi^{1}> \xi^{2}$, or $f^{1}(t,y,z)> f^{2}(t,y,z)$, or $h^{1}(t,y)> h^{2}(t,y)$, a.s.,
$Y_{t}^{1}> Y_{t}^{2},\ \ a.s.,\ \forall\, t \in [0,T]$.
\end{theorem}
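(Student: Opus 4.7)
The plan is to linearise the difference equation and then multiply by a suitable positive exponential to produce a representation in which every term has the correct sign. First, set $\bar Y:=Y^1-Y^2$, $\bar Z^{(i)}:=Z^{1(i)}-Z^{2(i)}$ and $\bar\xi:=\xi^1-\xi^2\ge 0$. Subtracting the two BDSDEs and invoking $({\bf A3})$ together with the telescoping identity underlying the definition of $\beta^i_s$, the driver differences can be written in the linear form
\begin{align*}
f^1(s,Y^1_{s^-},Z^1_s)-f^2(s,Y^2_{s^-},Z^2_s)&=\alpha_s\bar Y_{s^-}+\sum_{i=1}^{m}\beta^i_s\bar Z^{(i)}_s+F_s,\\
h^1(s,Y^1_{s^-})-h^2(s,Y^2_{s^-})&=\gamma_s\bar Y_{s^-}+H_s,\\
g(s,Y^1_{s^-})-g(s,Y^2_{s^-})&=\delta_s\bar Y_{s^-},
\end{align*}
where $\alpha_s,\gamma_s,\delta_s$ and the $\beta^i_s$ are bounded predictable processes and $F_s,H_s\ge 0$ by the ordering hypotheses on the drivers. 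Thus $\bar Y$ satisfies a \emph{linear} GBDSDEL.

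Next I would introduce the adjoint exponential $\Gamma$ on $[t,T]$ defined by
\begin{align*}
d\Gamma_s=\Gamma_{s^-}\Big[\alpha_s\,ds+\gamma_s\,dA_s+\sum_{i=1}^{m}\beta^i_s\,dH^{(i)}_s\Big],\qquad\Gamma_t=1.
\end{align*}
Condition \eqref{adprop} is precisely the requirement $1+\sum_i\beta^i_s\Delta H^{(i)}_s>0$ at every jump time of $L$, so the Dol\'eans--Dade formula delivers $\Gamma_s>0$ a.s.\ for all $s\in[t,T]$; this is the unique role played by the jump-size hypothesis. Applying It\^o's product rule to $\Gamma_s\bar Y_s$ and using the orthonormality $\langle H^{(i)},H^{(j)}\rangle_s=\delta_{ij}s$, the $\alpha\bar Y$ and $\gamma\bar Y$ drifts cancel in pairs and, crucially, the optional covariation $d[\Gamma,\bar Y]_s$ produces exactly the drift $\Gamma_{s^-}\sum_i\beta^i_s\bar Z^{(i)}_s\,ds$ that cancels the $\bar Z$-dependent drift coming from $\Gamma_{s^-}d\bar Y_s$. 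This leaves
\begin{align*}
d(\Gamma_s\bar Y_s)=-\Gamma_{s^-}F_s\,ds-\Gamma_{s^-}H_s\,dA_s-\Gamma_{s^-}\delta_s\bar Y_{s^-}\,d\overleftarrow{B}_s+dN_s,
\end{align*}
with $N$ a forward local martingale.

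I would then apply the It\^o--Meyer formula to $\phi(\Gamma_s\bar Y_s)$ with $\phi(y)=(y^-)^2$. Since $\Gamma>0$ one has $(\Gamma\bar Y)^-=\Gamma\bar Y^-$; the continuous quadratic variation contributes $\Gamma_{s^-}^{2}\delta_s^{2}\bar Y_{s^-}^{2}\,ds$ through the backward Brownian; the pure-jump residuals $R^{\Gamma\bar Y}_s$ are nonnegative by convexity; and taking expectation kills both the forward martingale and the backward It\^o integral. Since $\bar Y_T\ge 0$ yields $\phi(\Gamma_T\bar Y_T)=0$, one obtains
\begin{align*}
\E\!\big[\Gamma_t^{2}(\bar Y_t^-)^{2}\big]=-2\E\!\int_t^T\!\Gamma_{s^-}^{2}\bar Y^-_{s^-}F_s\,ds-2\E\!\int_t^T\!\Gamma_{s^-}^{2}\bar Y^-_{s^-}H_s\,dA_s-\E\!\int_t^T\!\Gamma_{s^-}^{2}\delta_s^{2}(\bar Y^-_{s^-})^{2}\,ds-\E\!\Big[\sum_{t<s\le T}R^{\Gamma\bar Y}_s\Big],
\end{align*}
every term on the right being non-positive. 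Hence $\E[\Gamma_t^{2}(\bar Y_t^-)^{2}]=0$, and positivity of $\Gamma_t$ forces $\bar Y_t^-=0$, i.e.\ $Y^1_t\ge Y^2_t$ a.s. The strict-inequality refinement follows by isolating a strictly positive contribution from $\bar\xi$, $F$ or $H$ in the pre-expectation representation of $\Gamma_t\bar Y_t$ and a standard perturbation argument.

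The main technical obstacle I anticipate is the bookkeeping in the It\^o product formula in the presence of Teugels-martingale jumps---in particular, checking that $d[\Gamma,\bar Y]_s$ reproduces exactly the $\beta\bar Z$ drift to be cancelled---and verifying the integrability conditions that allow expectation to eliminate the forward-$dH$ martingales and the backward It\^o integral. Everything else is standard, with assumption \eqref{adprop} used only once: to keep $\Gamma$ strictly positive across jumps.
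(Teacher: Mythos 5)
Your linearisation and your use of \eqref{adprop} solely to keep the Dol\'eans--Dade exponential strictly positive across jumps match the paper exactly. Where you diverge is that the paper also puts the backward-Brownian coefficient $c_r=\delta_r$ \emph{inside} the exponential, i.e.\ $d\Gamma_s=\Gamma_{s^-}[a_s\,ds+b_s\,dA_s+c_s\,d\overleftarrow{B}_s+\sum_i\beta^i_s\,dH^{(i)}_s]$; then in the product rule the backward integral and its quadratic variation cancel exactly, leaving the identity $\Gamma_{t,t}\bar Y_t=\Gamma_{t,T}\bar\xi+\int_t^T\Gamma\bar f\,dr+\int_t^T\Gamma\bar h\,dA_r-\sum_i\int_t^T\Gamma(\bar Z^{(i)}+\bar Y_{r^-}\beta^i_r)\,dH^{(i)}_r$, and a single conditional expectation gives $\bar Y_t=\E[\Gamma_{t,T}\bar\xi+\int\Gamma\bar f+\int\Gamma\bar h\mid\mathcal F_t]\ge0$, with the strict version for free. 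You instead leave the $\delta_s\bar Y_{s^-}\,d\overleftarrow{B}_s$ term outstanding and attack it with the It\^o--Meyer formula for $\phi(y)=(y^-)^2$.

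That choice creates a genuine gap: in the doubly stochastic It\^o formula (Pardoux--Peng, Lemma 1.3) the quadratic-variation correction of a \emph{backward} It\^o integral carries the opposite sign to that of a forward one. Written from $t$ to $T$, the contribution of $-\Gamma_{s^-}\delta_s\bar Y_{s^-}\,d\overleftarrow{B}_s$ to $\E[\phi(\Gamma_t\bar Y_t)]$ is $+\E\int_t^T\mathbf 1_{\{\Gamma_s\bar Y_s<0\}}\Gamma_s^2\delta_s^2\bar Y_s^2\,ds\ge0$, not the non-positive term you display (this is the same sign convention that forces the $+\int|g|^2ds$ term in the paper's a priori estimate \eqref{est} and the restriction $\alpha<1$ in $({\bf A3})$). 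So your assertion that ``every term on the right is non-positive'' fails, and $\E[\Gamma_t^2(\bar Y_t^-)^2]=0$ does not follow as stated. The argument is repairable, because $g$ is independent of $z$ and Lipschitz in $y$, so $\delta$ is bounded and Gronwall's lemma applied to $u(t)=\E[(\Gamma_t\bar Y_t)^-{}^2]$ closes the estimate; but that step must be supplied. A second, smaller gap: the $((\cdot)^-)^2$ method only ever yields the weak inequality, so your strict comparison cannot be obtained by ``a standard perturbation argument'' from it -- you need the pre-expectation representation with the backward term eliminated, i.e.\ essentially the paper's choice of exponential, to conclude $\bar Y_t>0$ when $\bar\xi>0$ or $\bar f>0$ or $\bar h>0$. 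Integrability of the stochastic exponential (so that the $dH^{(i)}$ and backward integrals really vanish under expectation) is glossed over in both arguments and deserves a word either way.
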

\begin{proof}
Set
\begin{eqnarray*}
a_t&=&
\frac{f^{1}(t,Y_{t^{-}}^{1},Z_{t}^{1})-f^{1}(t,Y_{t^{-}}^{2},Z_{t}^{1})}{
(Y_{t^{-}}^{1}-Y_{t^{-}}^{2})\textbf{1}_{\{Y_{t^{-}}^{1}\neq
Y_{t^{-}}^{2}\}}},\\
b_t&=&
\frac{h^1(t,Y_{t^{-}}^{1})-h^1(t,Y_{t^{-}}^{2})}{(Y_{t^{-}}^{1}-Y_{t^{-}}^{2})
\textbf{1}_{\{Y_{t^{-}}^{1}\neq
Y_{t^{-}}^{2}\}}},\\
c_t&=&\frac{g(s,Y_{t^{-}}^{1})-g(t,Y_{t^{-}}^{2})}{(Y_{t^{-}}^{1}-Y_{t^{-}}^{2})
\textbf{1}_{\{Y_{t^{-}}^{1}\neq Y_{t^{-}}^{2}\}}}.
\end{eqnarray*}
Next, it follows from $({\bf A3})$ that the processes $(a_t)_{t\in[0,T]}$,
$(b_t)_{t\in[0,T]}$ and  $(c_t)_{t\in[0,T]}$  are measurable and bounded.

Therefore, for $0\leq s \leq t \leq T$, the linear BDSDE
\begin{eqnarray*}
\Gamma_{s,t}=1+\displaystyle\int_s^t\Gamma_{s,r^{-}}dX_r,
\end{eqnarray*}
with
\begin{eqnarray*}
X_t=\int_{0}^{t}a_r dr+\int_{0}^{t}b_{r}dA_{r}+\int_{0}^{t}c_{r}\overleftarrow{dB_{r}}+\sum_{i=1}^{m}\int_{0}^{t}\beta_{r}^{i}dH_{r}^{(i)}
\end{eqnarray*}
have (cf. Doléans-Dade exponential formula) a unique $\mathcal{F}_t$-measurable solution
\begin{eqnarray}
 \Gamma_{s,t}&=&\exp\Big(\int_{s}^{t}a_r dr+\int_{s}^{t}b_{r}dA_{r}+\int_{s}^{t}c_{r}\overleftarrow{dB_{r}}-\frac{1}{2}\int_{s}^{t}|
c_r|^2dr\Big)\nonumber\\
&&\times\prod_{s<r\leq t}(1+\sum_{i=1}^{m}\beta_{r}^{i}\Delta H_{r}^{(i)})\exp\left(-\sum_{i=1}^{m}\beta_{r}^{i}\Delta H_{r}^{(i)}\right).
\end{eqnarray}

Further, denoting $\bar{\xi}=\xi^{1}-\xi^{2}, \, \, \bar{Y}_t
=Y_{t}^{1}-Y_{t}^{2}, \,\, \bar{Z}_t=Z_{t}^{1}-Z_{t}^{2},\,\,\bar{f}_{t}=f^{1}(t,Y_{t^{-}}^{2},Z_{t}^{2})-f^{2}(t,Y_{t^{-}}^{2},Z_{t}^{2})$ and $\bar{h}_{t}=h^{1}(t,Y_{t^{-}}^{2})-h^{2}(t,Y_{t^{-}}^{2})$, we have
\begin{eqnarray}\label{eq3l}
\bar{Y}_{t}&=&\bar{\xi}+\int_{t}^{T}[a_s \bar{Y}_{s^{-}}+ \sum_{i=1}^{m}\beta_s^i
\bar{Z}_{s}^{(i)}+\bar{f}_{s}]ds +\int_{t}^{T}[b_s
\bar{Y}_{s^{-}}+\bar{h}_s]dA_{s}+ \int_{t}^{T}c_s
\bar{Y}_{s^{-}}\overleftarrow{dB_{s}}\notag\\
&&-\sum_{i=1}^{m}\int_{t}^{T}\bar{Z}_{s}^{(i)}dH_{s}^{(i)},\ \ \ \ \ t\in[0,T].
\end{eqnarray}
Itô's formula to $\Gamma_{s,r}Y_r$ from $r = t$ to $r = T$ provides
\begin{eqnarray*}
\Gamma_{s,t}\bar{Y}_t&=&\Gamma_{s,T}\bar{\xi}-\int_{t}^{T}\Gamma_{s,r^{-}}d\bar{Y}_r
-\int_{t}^{T}\bar{Y}_{r^{-}}d\Gamma_{s,r^{}}
-
\int_{t}^{T}d[\Gamma,\bar{Y}]_{r}\\
&=&\Gamma_{s,T}\bar{\xi}+\int_{t}^{T}\Gamma_{s,r^{-}}
[a_r \bar{Y}_{r^{-}}+ \sum_{i=1}^{m}\beta_r^i
\bar{Z}_{r}^{(i)}+\bar{f}_{r}]dr+\int_{t}^{T}\Gamma_{s,r^{-}}[b_r
\bar{Y}_{r^{-}}+\bar{h}_r]dA_{r}\\
&&+\int_{t}^{T}\Gamma_{s,r^{-}}c_r
\bar{Y}_{r^{-}}\overleftarrow{dB_{r}}-
\sum_{i=1}^{m}\int_{t}^{T}\Gamma_{s,r^{-}}\bar{Z}_{r}^{(i)}dH_{r}^{(i)}
\notag\\&& -\int_{t}^{T}\bar{Y}_{r^{-}}\Gamma_{s,r^{-}}a_{r}dr-\int_{t}^{T}\bar{Y}_{r^{-}}\Gamma_{s,r^{-}}b_{r}dA_r
-\int_{t}^{T}\bar{Y}_{r^{-}}\Gamma_{s,r^{-}}c_{r}\overleftarrow{dB_{r}}
+\int_{t}^{T}\bar{Y}_{r^{-}}\Gamma_{s,r^{-}}|c_{r}|^2dr\\
&&-\sum_{i=1}^{m}\int_{t}^{T}\bar{Y}_{r^{-}}\Gamma_{s,r^{-}}\beta_{r}^{i}dH_{r}^{(i)}
-\int_{t}^{T}\bar{Y}_{r^{-}}\Gamma_{s,r^{-}}|c_{r}|^2dr
-\int_{t}^{T}\sum_{i=1}^{m}
\Gamma_{s,r^{-}}\beta_r^i
\bar{Z}_{r}^{(i)}dr\\
&=&\Gamma_{s,T}\bar{\xi}+\int_{t}^{T}\Gamma_{s,r^{-}}\bar{f}_{r}dr+\int_{t}^{T}\Gamma_{s,r^{-}}
\bar{h}_{r}dA_r-\sum_{i=1}^{m}\int_{t}^{T}\Gamma_{s,r^{-}}(\bar{Z}_{r}^{(i)}+\bar{Y}_{r^{-}}\beta_{r}^{i})dH_{r}^{(i)}.
\end{eqnarray*}
Taking conditional expectation w.r.t. $\mathcal{F}_s$, is not hard to see that for $s=t$
\begin{eqnarray*}
\bar{Y}_{t}&=&\E\left(\Gamma_{t,T}\bar{\xi}+
\int_t^T\Gamma_{t,r}\bar{f}_{r}dr +\int_{t}^{T}\Gamma_{t,r^{-}}\bar{h}_rdA_{r}\ | \ \mathcal{F}_{t}\right).
\end{eqnarray*}
Since, according to \eqref{adprop}, the process $\Gamma_{t,r}$ is strictly positive, we obtain $\bar{Y}_{t}\geq0$, a.s. i.e. $Y_t^1\geq Y_t^2$, a.s. Moreover if $\bar{\xi}> 0$, a.s. or $\bar{f}_t> 0$, a.s. or
$\bar{h}_t> 0$, a.s. then $\bar{Y}_{t}>0$, a.s. i.e. $Y_t^1> Y_t^2$, a.s.
\end{proof}

\section{GBDSDEL with continuous coefficients.}
In this section, we study the GBDSDEL under the continuous and linear growth condition on the coefficients. Roughly speaking, We prove the existence of a minimal or maximal solution by the well know approximation method of the functions $f$ and $h$ (Lemma $\ref{Lemma3.1}$) and the comparison theorem (Theorem $\ref{tc}$).

In addition, we give the following assumptions:
\begin{description}
\item \textbf{(H1)}\ \ The terminal value $\xi \in \mathrm{L}^{2}(\Omega,
\mathcal{F}_{T}, \mathbb{P}, \mathbb{R})$ such that for all $\lambda>0$,\ \
$\E (e^{\lambda A_T}|\xi|^2)<\infty,$
\noindent \item \textbf{(H2)}\ \ The coefficients $f:\Omega \times [0,T]\times
\mathbb{R} \times \R^{m}\rightarrow
\mathbb{R}$ and $g, h:\Omega \times [0,T]\times \mathbb{R}
\rightarrow \mathbb{R}$, satisfy, for some constants $\beta_1\in \R,\, \beta_2 < 0,\, K>0$ and  three $\mathcal{F}_t$-measurable processes
$\{f_t,\ g_t\ h_t:0\leq t\leq T\}$ with value in $[1,\infty[$
and for all $(t,y,z)\in\Omega \times [0,T]\times
\mathbb{R} \times \R^{m}$,
\begin{itemize}
\item [(i)] $f(.,y,z), g(.,y)$ and $h(.,y)$ are jointly measurable,
\item [(ii)] $|f(t,y,z)|\leq f_t+ K(|y|+\|z\|),\;\; |f(t,y,z)-f(t,y,z')|\leq K\|z-z'\|,\; \forall\, y\in\R$,
\item [(iii)] $|h(t,y)|\leq h_t+ K|y|$, for some $K>0$,
\item [(iv)] $\displaystyle{\E (\int_{0}^{T}e^{\mu t+\lambda A_t}f_t^2dt+\int_{0}^{T}e^{\mu t+\lambda A_t}g_t^2dt+\int_{0}^{T}e^{\mu t+\lambda A_t}h_t^2dA_t)<\infty}$, for all $\mu,\,\lambda>0$,
\item [(v)] $\mid g(t,y)-g(t,y') \mid^{2} \leq K\mid y-y'\mid^{2}$,
\item [(vi)] $y\mapsto f(t,y,z)$ and $y\mapsto h(t,y)$ are continuous for all $z,\omega, t$.
\end{itemize}
\end{description}
The main result of this paper is the following theorem.
\begin{theorem}\label{te}
Under assumptions $({\bf H1})$ and $({\bf H2})$, the GBDSDEL \eqref{a011}
has solution $(Y,Z)\in \mathcal{E}_{m}(0,T)$ which is a minimal one, in the sense that, if $(Y^{*},Z^{*})$ is any other solution we have
$Y^{*}\leq Y$, a.s.
\end{theorem}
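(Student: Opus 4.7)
My plan is to combine the Lipschitz existence Theorem~\ref{lm0} with the comparison Theorem~\ref{tc} through a Lepeltier--San Martin style monotone Lipschitz approximation. Since (H2)(ii) already supplies Lipschitz continuity of $f$ in $z$, only the $y$-variable has to be regularised, and, because we want $Y^{*}\leq Y$ a.s., we approximate from above. For $n\geq K$ set
\begin{eqnarray*}
f_{n}(t,y,z)=\sup_{q\in\Q}\big\{f(t,q,z)-n|y-q|\big\},\qquad h_{n}(t,y)=\sup_{q\in\Q}\big\{h(t,q)-n|y-q|\big\}.
\end{eqnarray*}
Standard arguments show that $f_{n}$ is $n$-Lipschitz in $y$ and $K$-Lipschitz in $z$, $h_{n}$ is $n$-Lipschitz in $y$, both retain the same linear-growth majorants $f_{t}$ and $h_{t}$ as $f$ and $h$, the sequences $(f_{n})$ and $(h_{n})$ are non-increasing in $n$, and converge pointwise to $f$ and $h$ by (H2)(vi). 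Hence $(\xi,f_{n},g,h_{n},A)$ meets assumptions (A1)--(A3) (if needed after the exponential change of variable from the first remark after Theorem~\ref{lm0} to secure $\beta_{2}<0$), so Theorem~\ref{lm0} provides a unique $(Y^{n},Z^{n})\in\cal E_{m}(0,T)$.

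Next I would derive a uniform a priori bound in $\cal E_{m}$. Applying It\^o's formula to $e^{\mu t+\lambda A_{t}}|Y^{n}_{t}|^{2}$, using the $n$-independent linear-growth estimate on $(f_{n},g,h_{n})$ and choosing $\mu,\lambda$ large enough to absorb the $Y$ and $Z$ cross-terms by Young's inequality, one gets
\begin{eqnarray*}
\E\Big[\sup_{0\le t\le T}|Y^{n}_{t}|^{2}+\int_{0}^{T}|Y^{n}_{s}|^{2}dA_{s}+\int_{0}^{T}\|Z^{n}_{s}\|^{2}ds\Big]\le C
\end{eqnarray*}
with $C$ independent of $n$. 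Since $f_{n}\geq f_{n+1}$ and $h_{n}\geq h_{n+1}$, Theorem~\ref{tc} applied to $(Y^{n},Z^{n})$ and $(Y^{n+1},Z^{n+1})$ yields $Y^{n}\geq Y^{n+1}$ a.s., while the same theorem applied to $(Y^{n},Z^{n})$ and any putative solution $(Y^{*},Z^{*})$ of \eqref{a011} (both sides using the Lipschitz-in-$z$ $f_{n}$ as the ``upper'' generator) yields $Y^{n}\geq Y^{*}$ a.s. Combined with the uniform $L^{2}$ bound, monotone convergence gives $Y\in\cal S^{2}\cap\cal A^{2}$ with $Y^{n}\downarrow Y$ a.s. and in the relevant norms.

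To recover $Z$ I would apply It\^o's formula to $|Y^{n}-Y^{p}|^{2}$. Orthonormality of the Teugels martingales and independence of $B$ from $H^{(i)}$ produce the dissipation term $\E\int_{0}^{T}\|Z^{n}_{s}-Z^{p}_{s}\|^{2}ds$ with the correct sign, while the drift cross-term
\begin{eqnarray*}
\E\int_{0}^{T}2\big(Y^{n}_{s^{-}}-Y^{p}_{s^{-}}\big)\big(f_{n}(s,Y^{n}_{s^{-}},Z^{n}_{s})-f_{p}(s,Y^{p}_{s^{-}},Z^{p}_{s})\big)ds
\end{eqnarray*}
is split by Lipschitz-in-$z$ of $f_{n},f_{p}$ into a Young-controlled $\|Z^{n}-Z^{p}\|^{2}$ piece and a remainder that vanishes as $n,p\to\infty$ because $Y^{n}$ is already known to be Cauchy in $\cal M^{2}$ and the approximants share a common linear-growth envelope. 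Gronwall's lemma then delivers the Cauchy property of $(Z^{n})$ in $\cal M^{2}(0,T,\R^{m})$, with limit $Z$. Pointwise convergence $f_{n}(s,Y^{n}_{s^{-}},Z^{n}_{s})\to f(s,Y_{s^{-}},Z_{s})$ and $h_{n}(s,Y^{n}_{s^{-}})\to h(s,Y_{s^{-}})$ together with the uniform $L^{2}$ integrability supplied by the linear-growth majorant lets one pass to the limit in each integral of the equation for $(Y^{n},Z^{n})$, so that $(Y,Z)$ solves \eqref{a011}; letting $n\to\infty$ in $Y^{n}\geq Y^{*}$ then gives the required extremality.

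The main obstacle is the $Z$ convergence: outside the Lipschitz regime the Cauchy estimate for $Y^{n}-Y^{p}$ does not close by a direct contraction, and one has to exploit both the monotonicity of $Y^{n}$ and the $n$-uniform $L^{2}$ bound to treat $f_{n}-f_{p}$ as a vanishing perturbation. A secondary delicate point is the verification, at every invocation of Theorem~\ref{tc}, of the jump condition \eqref{adprop}: it is here that the assumption that $L$ has only $m$ distinct jump sizes with no continuous part is essential, since it forces the slopes $\beta^{i}_{t}$ constructed from the Lipschitz-in-$z$ constant $K$ of $f_{n}$ (uniform in $n$) to produce a jump $\sum_{i}\beta^{i}_{t}\Delta H^{(i)}_{t}$ strictly greater than $-1$.
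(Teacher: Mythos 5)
Your proposal follows the same overall architecture as the paper's proof: a monotone Lipschitz approximation in the spirit of Lepeltier--San Martin, existence for each approximant via Theorem~\ref{lm0}, monotonicity of $(Y^n)$ via Theorem~\ref{tc}, an $n$-uniform a priori bound from It\^o's formula applied to $e^{\mu t+\lambda A_t}|Y^n_t|^2$, a Cauchy estimate for $(Z^n)$ in $\mathcal{M}^2(0,T,\R^m)$, and a passage to the limit in \eqref{30}. Two points differ. First, you approximate from above with $\sup_{q}\{f(t,q,z)-n|y-q|\}$, so $f_n\searrow f$ and $Y^n\downarrow Y$ with $Y\geq Y^{*}$; the paper uses the infimum regularisation (Lemma~\ref{Lemma3.1}), gets $Y^n\uparrow Y$, and its Step~4 concludes $Y\leq Y'$. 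You have in fact proved the inequality $Y^{*}\leq Y$ exactly as written in the statement, whereas the paper's own proof establishes the opposite inequality consistent with the word ``minimal''; the statement and its proof are internally inconsistent on this point, and your construction is the one the paper defers to its closing remark as yielding the maximal solution. Second, you regularise only the $y$-variable and carry the $K$-Lipschitz-in-$z$ bound of (H2)(ii) unchanged through the approximation, while the paper applies Lemma~\ref{Lemma3.1} jointly in $(y,z)$, making $f_n$ only $n$-Lipschitz in $z$. Your variant is actually preferable here: the slopes $\beta^i_t$ entering condition \eqref{adprop} of Theorem~\ref{tc} are then controlled by the fixed constant $K$ rather than by $n$, so the jump condition can be checked uniformly in $n$, whereas the paper merely asserts that \eqref{adprop} ``still holds for all $n\geq K$'' without addressing the growth of the Lipschitz constant. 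Neither you nor the paper fully closes the verification of \eqref{adprop} (it is a hypothesis of Theorem~\ref{tc}, not an automatic consequence of the $m$-jump-size assumption), but your version at least reduces it to an $n$-independent condition.
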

To prove this theorem, we need an important result which gives an approximation
of continuous functions by Lipschitz functions (see Lepeltier and San Martin \cite{LSm} to
appear for the proof).
\begin{lemma}\label{Lemma3.1}
Let $\phi : [0,T]\times\R^p\rightarrow\R$ be a continuous function with linear growth, that is, there exists a constant
$K>0$ such that $\forall \,x\in \R^p, |\phi(t,x)|\leq \phi_t+K\|x\|$. Then the sequence of functions
\begin{eqnarray*}
\phi_n(t, x) = \inf_{y\in \Q^p}\{\phi(t,y)+n|x-y|\}
\end{eqnarray*}
is well defined for $n \geq K$ and satisfies
\begin{description}
\item $(a)$\;  Linear growth: $\forall \, (t,x)\in\times\R^p,\;\; |\phi_n(t,x)|\leq \phi_t+K\|x\|)$,
\item $(b)$ \;  Monotonicity: $\forall \, (t,x)\in\times\R^p,\;\; \phi_n(t,x)\nearrow$,
\item $(c)$\;  Lipschitz condition: $\forall \,t\in[0,T], x, y\in\R^p,\;\; |\phi_n(t,x) - \phi_n(t,y)|\leq n\|x - y\|$,
\item $(d)$\,  Strong convergence: if $x_n\rightarrow x$\; as \;$n\rightarrow\infty$, then $\phi_n(t,x_n)\rightarrow \phi(t,x)$ as $n\rightarrow\infty$ for all $t$.
\end{description}
\end{lemma}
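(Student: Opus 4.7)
The plan is to verify well-definedness first and then each of the four properties in turn; the single workhorse estimate is the consequence of linear growth plus the triangle inequality, $\phi(t,y) + n|x-y| \geq -\phi_t - K\|x\| + (n-K)|x-y|$, valid whenever $n \geq K$. This at once shows the infimum over $y \in \Q^p$ is finite (well-definedness) and supplies the lower bound $\phi_n(t,x) \geq -\phi_t - K\|x\|$ needed in (a).

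For the upper bound in (a), I use the density of $\Q^p$ in $\R^p$ together with the continuity of $\phi(t,\cdot)$: picking $y_k \in \Q^p$ with $y_k \to x$ yields $\phi_n(t,x) \leq \phi(t,y_k) + n|x - y_k| \to \phi(t,x) \leq \phi_t + K\|x\|$. This same density argument simultaneously gives the pointwise inequality $\phi_n(t,x) \leq \phi(t,x)$, which I will reuse in (d). Monotonicity (b) is immediate because $n \mapsto n|x-y|$ is increasing for each fixed $y$, so taking the infimum preserves the inequality. The Lipschitz property (c) is the standard observation that an infimum of a family of $n$-Lipschitz functions is $n$-Lipschitz: for any $y \in \Q^p$, $\phi(t,y) + n|x-y| \leq \phi(t,y) + n|x'-y| + n|x-x'|$; taking the infimum over $y$ of both sides gives $\phi_n(t,x) \leq \phi_n(t,x') + n|x-x'|$, which is symmetric in $x$ and $x'$.

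The main obstacle is the strong convergence (d). The upper bound $\limsup_n \phi_n(t,x_n) \leq \phi(t,x)$ follows from $\phi_n \leq \phi$ combined with continuity of $\phi(t,\cdot)$ at $x$ and $x_n \to x$. For the lower bound, I pick near-minimizers $y_n \in \Q^p$ satisfying $\phi(t,y_n) + n|x_n - y_n| \leq \phi_n(t,x_n) + 1/n$. Because $\phi_n(t,x_n)$ remains bounded (by (a), since $x_n$ is bounded), the workhorse estimate applied with $x_n, y_n$ in place of $x, y$ shows that $(n-K)|x_n - y_n|$ stays bounded, so $y_n - x_n \to 0$ and hence $y_n \to x$. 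Continuity of $\phi(t,\cdot)$ then yields $\phi(t,y_n) \to \phi(t,x)$, and discarding the nonnegative penalty in the near-minimizer inequality gives $\phi_n(t,x_n) + 1/n \geq \phi(t,y_n)$, whence $\liminf_n \phi_n(t,x_n) \geq \phi(t,x)$. Combining the two bounds proves (d); the subtle point is precisely that coupling $n \to \infty$ with $x_n \to x$ still forces the near-minimizer to track $x$, thanks to the coercivity contributed by the condition $n \geq K$.
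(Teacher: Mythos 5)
The paper itself contains no proof of this lemma --- it explicitly defers to Lepeltier and San Martin \cite{LSm} for the argument. Your proposal is a correct, self-contained proof and is essentially the classical inf-convolution argument from that reference: the coercivity estimate $\phi(t,y)+n|x-y|\ge -\phi_t-K\|x\|+(n-K)|x-y|$ gives well-definedness and the lower bound in $(a)$, density of $\Q^p$ plus continuity gives $\phi_n\le\phi$ and the upper bound, the infimum-of-$n$-Lipschitz-functions observation gives $(c)$, and the near-minimizer argument in $(d)$ --- where boundedness of $\phi_n(t,x_n)$ forces $(n-K)|x_n-y_n|$ to stay bounded, hence $y_n\to x$ --- is exactly the standard way the coupling of $n\to\infty$ with $x_n\to x$ is handled.
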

\begin{proof}[\it Proof of theorem 3.1]
For fixed $(t,\omega)$, it follows from $({\bf H2})$ that $f(t,\omega)$ and $h(t,\omega)$ are continuous and with linear growth. Hence, by Lemma $\ref{Lemma3.1}$ there exist sequences of functions $f_n(t,\omega)$ and $h_n(t,\omega)$ associated to $f$ and $h$, respectively. Then $f_n,\,h_n$ are
measurable functions as well as Lipschitz functions. Moreover,  since $\xi$ satisfies $({\bf H1})$ we get from Ren et al. \cite{Ren} that there is a unique
pair $\{(Y^n_t ,Z^n_t), 0 \leq t\leq T\}$ of $\cal{F}_t$-measurable processes taking
values in $\R\times \R^m$ and satisfying
\begin{eqnarray}\label{30}
Y_t^n&=&\xi+\int_{t}^{T}f_n(s,Y_{s^-}^n,Z_s^n)ds+\int_{t}^{T}h_n(s,Y_{s^-}^n)dA_s+
\int_{t}^{T}g(s,Y_{s^-}^n)\overleftarrow{dB_{s}}
\notag\\&&-\sum_{i=1}^{m}\int_{t}^{T}Z_{s}^{n(i)}dH_{s}^{(i)}, \ \ \ \ t\in[0,T],
\end{eqnarray}
and
\begin{eqnarray*}
\E\left(\sup_{0\leq t\leq T}|Y^n_t|^2+\int_{0}^{T}\|Z^n_s\|^2ds\right)<\infty.
\end{eqnarray*}
Since for fixed $(t,\omega),\, f_{n+1}(t,\omega)\geq f_{n}(t,\omega),\, h_{n+1}(t,\omega)\geq h_{n}(t,\omega)$ and inequality \eqref{adprop} still holds, for all $n\geq K$, it follows from the comparison theorem (Theorem \ref{tc}) that for every $n\geq K$
\begin{eqnarray}
Y^{n}\leq Y^{n+1}, \; \;  dt\otimes d\P\mbox{-a.s.}\label{comp}
\end{eqnarray}
The idea of the proof of Theorem 3.1  is to establish that the limit of the sequence $(Y^n,Z^n)$ is a solution of the BDSDE \eqref{a011}. It follows
by the same steps and technics as in \cite{Aman} (see Theorem 3.1).\newline
{\it Step 1: A priori estimates}\newline
There exists a constant $C>0$ independent of $n$ such that
\begin{eqnarray}
\sup_{n\geq K}\E\left(\sup_{0\leq t\leq T}|Y^n_t|^2+\int_{0}^T\|Z_t^n\|^2dt\right)\leq C.\label{esti}
\end{eqnarray}
Indeed, for any $\mu,\lambda>0$, Itô's formula applied to $e^{\mu t+\lambda A_t}|Y_t^n|^2$ provides
\begin{eqnarray}
&&e^{\mu t+\lambda A_t}|Y_{t}^n|^{2}+\lambda\int_{t}^{T}e^{\mu s+\lambda A_s}|Y_{s}^{n}|^{2}dA_s
+\mu\int_{t}^{T}e^{\mu s+\lambda A_s}|Y_{s}^{n}|^{2}ds\nonumber\\
&&=e^{\mu T+\lambda A_T}|\xi|^{2}
+2\int_{t}^{T}e^{\mu s+\lambda A_s}Y_{s}^{n}f_n(s,Y_{s}^{n},Z_{s}^{n})ds+
2\int_{t}^{T}e^{\mu s+\lambda A_s}Y_{s}^{n}g(s,Y_{s}^{n})dB_{s}\nonumber \\
&&+2\int_{t}^{T}e^{\mu s+\lambda A_s}Y_{s}^{n}h_n(s,Y_{s}^{n})dA_s-
2\sum_{i=1}^{m}\int_{t}^{T}e^{\mu s+\lambda A_s}Y_{s}^{n}Z_{s}^{n(i)}dH_{s}^{(i)}+
\int_{t}^{T}e^{\mu s+\lambda A_s}|g(s,Y_{s}^{n})|^{2}ds\nonumber\\&&
-\sum_{i,j=1}^{m}\int_{t}^{T}e^{\mu s+\lambda A_s}Z_{s}^{n(i)}Z_{s}^{n(j)}d[H^{(i)},H^{(j)}]_s.\label{est}
\end{eqnarray}
Assumption $({\bf H2})$ together with Young's inequality imply, for any $\sigma>0$ et $\gamma>0$,
\begin{eqnarray*}
2Y_{s}^{n}f_n(s,Y_{s}^{n},Z_{s}^{n})&\leq&
\left(1+2K+\frac{1}{\sigma}K^2\right)\left|
Y_{s}^{n}\right|^2+\sigma\|Z_{s}^{n}\|^2+
f_s^2,\\
2Y_{s}^{n}h_n(s,Y_{s}^{n})&\leq& (2K+1)\left|
Y_{s}^{n}\right|^2+h_s^2,\hspace{4cm}\\
\left|g(s,Y_{s}^{n})\right|^2&\leq&\left(1+\gamma\right)C\left|
Y_{s}^{n}\right|^2+\left(1+\frac{1}{\gamma}\right)g_s^2.
\end{eqnarray*}
Therefore taking expectation in both side of $\eqref{est}$ with the suitable $\lambda$ and $\sigma$
\begin{eqnarray*}
&&\E \left(e^{\mu t+\lambda A_t}|Y_{t}^n|^{2}
+\int_{t}^{T}e^{\mu s+\lambda A_s}\|Z_{s}^{n}\|^{2}ds\right)\\
&&\leq C\E\left(e^{\mu T+\lambda A_T}|\xi|^{2}
+\int_{t}^{T}e^{\mu s+\lambda A_s}h_s^2dA_s+\int_{t}^{T}e^{\mu s+\lambda A_s}(
f_s^2+g_s^2)ds\right)<\infty,
\end{eqnarray*}
which by Burkhölder-Davis-Gundy's inequality provides
\begin{eqnarray*}
&&\E \left(\sup_{0\leq t\leq T}e^{\mu t+\lambda A_t}|Y_{t}^n|^{2}
+\int_{t}^{T}e^{\mu s+\lambda A_s}\|Z_{s}^{n}\|^{2}ds\right)\\
&&\leq C\E\left(e^{\mu T+\lambda A_T}|\xi|^{2}
+\int_{0}^{T}e^{\mu s+\lambda A_s}h_s^2dA_s+\int_{t}^{T}e^{\mu s+\lambda A_s}(
f_s^2+g_s^2)ds\right)<\infty.
\end{eqnarray*}
{\it Step 2: Convergence result}\newline
We have from \eqref{comp} and \eqref{esti} the existence of process $Y$ such that $Y^n_t\nearrow Y_t$ a.s. for all $t\in[0,T]$. Hence, it follows from Fatou's lemma together with the dominated convergence theorem that
\begin{eqnarray}
\E\left(\sup_{0\leq t\leq T}|Y_{t}|^{2}\right)\leq C\;\; \mbox{and}\;\; \E\left(\int_0^T|Y^n_s-Y_s|^2(ds+dA_s)\right)\rightarrow\, 0\label{conver}
\end{eqnarray}
as $n$ goes to infinity. Next, for all $n\geq n_0 \geq K$, it follows from Itô's formula, taking $t = 0$,
\begin{eqnarray*}
&&\E |Y_{0}^n-Y_{0}^{n+1}|^{2}+\E\int_{0}^{T}\|Z_{s}^n-Z_{s}^{n+1}\|^{2}ds\\
&=&2\E\int_{0}^{T}\left(Y_{s}^n-Y_{s}^{n+1}\right)\left(f_n(s,Y_{s}^n,Z_{s}^n)-f_{n+1}(s,Y_{s}^{n+1},Z_{s}^{n+1})\right)ds
\\&&+2\E\int_{0}^{T}\left(Y_{s}^n-Y_{s}^{n+1}\right)\left(h_n(s,Y_{s}^n)-h_{n+1}(s,Y_{s}^{n+1})\right)dA_s
+\E\int_{0}^{T}|g(s,Y_{s}^n)-g(s,Y_{s}^{n+1})|^{2}ds\\
&\leq& 2\Big(\E\int_{0}^{T}|Y_{s}^n-Y_{s}^{n+1}|^{2}ds\Big)^{\frac{1}{2}}
\Big(\E\int_{0}^{T}|f_n(s,Y_{s}^n,Z_{s}^n)-f_{n+1}(s,Y_{s}^{n+1},Z_{s}^{n+1})|^2ds\Big)^{\frac{1}{2}}\\
&&+2\Big(\E\int_{0}^{T}|Y_{s}^n-Y_{s}^{n+1}|^{2}dA_s\Big)^{\frac{1}{2}}
\Big(\E\int_{0}^{T}|h_n(s,Y_{s}^n)-h_{n+1}(s,Y_{s}^{n+1})|^2dA_s\Big)^{\frac{1}{2}}+
C\E\int_{0}^{T}|Y_{s}^n-Y_{s}^{n+1}|^{2}ds.
\end{eqnarray*}
The uniform linear growth condition on the sequence $(f_n, h_n)$ together with inequality \eqref{esti} provide the existence of a constant $C$
such that
\begin{eqnarray*}
\E\int_{0}^{T}\|Z_{s}^n-Z_{s}^{n+1}\|^{2}ds&\leq& C'\Big(\E\int_{0}^{T}|Y_{s}^n-Y_{s}^{n+1}|^{2}(ds+dA_s)\Big)^{\frac{1}{2}}.
\end{eqnarray*}
Thus from \eqref{conver}, $\{Z^n\}$ is a Cauchy sequence in a Banach space $\mathcal{M}^{2}(0,T,\R^{m})$, and there exists an $\cal{F}_t$-jointly
measurable process $Z$ such that $\{Z^n\}$ converges to $Z$ as $n\rightarrow \infty$.

Similarly, by Itô's formula together with Burkholder-Davis-Gundy inequality, it follows
that
\begin{eqnarray*}
\E\left(\sup_{0\leq t\leq T}|Y_{t}^n-Y_{t}^{n+1}|^{2}\right)\rightarrow \, 0\;\; \mbox{as}\;\; n\rightarrow\infty,
\end{eqnarray*}
from which we deduce that $\P$-almost surely, $Y^n$ converges uniformly to $Y$ which is continuous.

{\it Step 3: $(Y,Z)$ verifies GBDSDEL $\eqref{a011}$} \newline
Since $Z^n\rightarrow Z$ in $\cal{M}^2(0,T,\R^m)$, along a subsequence which we still denote $Z^n,\;\; Z^n\rightarrow Z, \;  dt\otimes d\P\;\; \mbox{a.e}$
and there exists $\Pi\in\mathcal{M}^{2}(0,T,\R^{m})$ such that $\forall n, |Z^n|<\Pi,\; dt\otimes d\P\;\; \mbox{a.e}$. Therefore, by Lemma \ref{Lemma3.1}, we have
\begin{eqnarray*}
f_{n}(t,Y_{t}^{n},Z_{t}^{n})&\rightarrow& f(t,Y_{t},Z_{t})\; dt\otimes d\P\;\; \mbox{a.e.},\\
h_{n}(t,Y_{t}^{n})&\rightarrow& h(t,Y_{t})\;\; dA_t\otimes d\P\;\; \mbox{a.e.}
\end{eqnarray*}
Moreover, from $({\bf H2})$ and \eqref{esti}, the dominated convergence theorem provides
\begin{eqnarray*}
\E\left(\int_{t}^{T}f_{n}(t,Y_{t}^{n},Z_{t}^{n})ds\right)
&\rightarrow &\E\left(\int_{t}^{T}f(t,Y_{t},Z_{t})ds\right),\\
\E\left(\int_{t}^{T}h_{n}(t,Y_{t}^{n})dA_s\right)&\rightarrow&\E\left(\int_{t}^{T}h(t,Y_{t})dA_s\right)
\end{eqnarray*}
as $n\rightarrow\infty$.
Further, in virtue of Burkholder-Davis-Gundy inequality, $({\bf H2})$ and \eqref{conver}, we obtain
\begin{eqnarray*}
\E\left(\underset{0\leq t\leq T}\sup\left|\int_{t}^{T}g(s,Y_{s}^{n})dB_s- \int_{t}^{T}g(s,Y_{s})dB_s \right|\right)
&\rightarrow 0 &\\
\E\left(\underset{0\leq t\leq T}\sup\left|\sum_{i=1}^{m}\left(\int_{t}^{T}Z_{s}^{n(i)}dH_s^{(i)}- \int_{t}^{T}Z_{s}^{(i)}dH_s^{(i)} \right)\right|\right)&\rightarrow
& 0
\end{eqnarray*}
as $n$ goes to infinity.
Finally, passing to the limit in \eqref{30}, we conclude that $(Y,Z)$ is a solution of GBDSDEL \eqref{a011}.

{\it Step 4: Minimal solution}\newline
Let $(Y',Z') \in \mathcal{E}^{2}_{m}(0,T)$ be any solution of GBDSDEL \ref{a011}.
By virtue of the comparison theorem (Theorem \ref{tc}), we have $Y^n\leq Y',\ \forall n\in\N$. Therefore, $Y\leq Y'$.
That proves that $Y$ is the minimal solution.
\end{proof}
\begin{remark}
Using the same arguments and the following approximating sequence
\begin{eqnarray*}
\phi_n(t, x) =\sup_{y\in\Q^p}\{\phi(t,y) - n|x - y|\},
\end{eqnarray*}
one can prove that the GBDSDEL $\eqref{a011}$ has a maximal solution
\end{remark}

\noindent{\bf Acknowledgments}\newline The authors thanks an anonymous
referee for his comments, remarks and for a
significant improvement to the overall presentation of this paper.


\begin{thebibliography}{99}

\bibitem{Aman} Aman, A., $L^p$-solution of reflected generalized BSDEs with non-Lipschitz coefficients. {\it Random Oper. Stoch. Equ.}, {\bf 17} no. 3: 201-219, 2009.

\bibitem{Aal} Aman, A., N'zi M. and Owo, J.M., A note on homeomorphism for backward doubly SDEs and Applications. {\it Stoch. Dyn.}, {\bf 10}, no. 4: 1-12, 2010.

\bibitem{BMat} Bally, V. and Matoussi, A., Weak solutions for SPDEs and backward doubly stochastic
differential equations.{\it J. Theoret. Probab.}, {\bf 14} no.1: 125-164, 2001.

\bibitem{Barles} Buckdahn, R. Barles, G. and Pardoux, E., Backward stochastic differential equations
and integral-partial differential equations. {\it Stochastics Stochastics Rep.}, {\bf 16} no. 1-
2):57-83, 1997.

\bibitem{Boufsi} Boufoussi, B., Jan Van Casteren, and Mrhardy, N., Generalized backward doubly
stochastic differential equations and SPDEs with nonlinear neumann boundary conditions.
{\it Bernoulli}, {\bf 13} no. 2: 423-446, 2007.

\bibitem{BC} Briand, P. and Carmona, R., Bsde with polynomial growth generators. {\it J. Appl.
Math. Stochastic Anal.}, {\bf 13} no. 3: 207-238, 2000.

\bibitem{KH} El Karoui, N. and Hamadène, S., Bsdes and risk-sensitive control, zero-sum and
nonzero-sum game problems of stochastic functional differential equations. {\it Stochastic
Process. Appl.}, {\bf 107} no. 1: 145-169, 2003.

\bibitem{ELK1} El Karoui, N., Peng, S., and Quenez, M.C., Backward stochastic differential equations
in finance. {\it Math. Finance}, {\bf 7} no. 1: 1-71, 1997.

\bibitem{HL2} Hamadène, S. and Lepeltier, J.P., Zero-sum stochastic differential games and backward
equations. {\it System Control Letters}, {\bf 24}: 259-263, 1995.

\bibitem{Ren} Hu, L. and Ren, Y., Stochastic pdies with nonlinear neumann boundary conditions
and generalized backward doubly stochastic differential equations driven by lévy processes.
{\it Journal of Computational and Applied Mathematics}, {\bf 229}: 230-239, 2009.

\bibitem{Koby} Kobylansky, M., Résultats d'existence et d'unicité pour des équations différentielles
stochastiques rétrogrades avec des générateurs à croissance quadratique. {\it C. R. Acad.
Sci. Paris Sér. I Math.}, {\bf 324} no. 1: 81-86, 1997.

\bibitem{LSm} Lepeltier, J.P. and San Martin, J., Backward stochastic differential equations with continuous
coefficients. {\it Statist. Probab. Lett}, {\bf 32} no. 4: 425-430, 1997.

\bibitem{AM} Matoussi, A. and Scheutzow, M., Stochastic pdes driven by nonlinear noise and backward
doubly sdes. {\it J. Theoret. Probab.}, {\bf 15} no. 1: 1-39, 2002.

\bibitem{NS} Nualart, D. and Schoutens, W., Chaotic and predictable representation for lévy processes.
{\it Stochastic Process. Appl.}, {\bf 90}: 109-122, 2000.

\bibitem{NSc} Nualart, D. and Schoutens, W., Backward stochastic differential equations and
feynman-kac formula for lévy processes with applications in finance. {\it Bernoulli},
{\bf 7}: 761-776, 2001.

\bibitem{PP1} Pardoux, E. and Peng, S., Adaped solutions of backward stochastic differential
equations. {\it System Control Lett.}, {\bf 14}: 535-581, 1990.

\bibitem{PP2} Pardoux, E. and Peng, S., Backward stochastic differential equations and quasilinear
parabolic partial differential equations. {\it Rozuvskii, B.L., Sowers, R.B.}, {\bf 176}: 200-217,
1992. Lect. Notes Control Inf. Sci.

\bibitem{PardPeng} Pardoux, E. and Peng, S., Backward doubly stochastic differential equations and systèmes
of quasilinear SPDEs. {\it Probab. Theory Related Fields.}, {\bf 98}: 209-227, 1994.

\bibitem{Pg} Peng, S., Probabilistic interpretation for systems of quasilinear parabolic partial differential
equations. Stoch. Stoch. Rep., {\bf 37}: 61-74, 1991.

\bibitem{Q} Zhou, Q., On Comparison Theorem and Solution of BSDEs for Lévy Processes. {\it Acta Mathematicae Applicatae Sinica
(English Series)}, {\bf 23} no. 3: 513-522, 2007.

\bibitem{Y} Shi, Y., Gu, Y. and Lui, K., Comparison Theorem of Backward Doubly Stochastic Doubly Stochastic Differential Equations and Applications. {\it Stochastic and Analysis and Applications}, {\bf 23}: 97-110, 2005.

\end{thebibliography}
\end{document}